\documentclass[11pt]{article}
\usepackage{latexsym}
\usepackage{amssymb}
\usepackage{amscd} 
\usepackage{amsmath} 
\usepackage{amsrefs}
\usepackage{amsthm}
\usepackage{color}
\usepackage{geometry}
\usepackage{graphicx} 
\usepackage[hidelinks]{hyperref}
\hypersetup{
    colorlinks=true,
    allcolors=black,  
}
\usepackage{mathtools}
\usepackage{subcaption}
\usepackage{tikz}
\usepackage{tikz-cd}
\usetikzlibrary{arrows}
\usetikzlibrary{patterns}
\usepackage{enumitem}

\setlength{\parskip}{0.5ex}
\normalsize

\def\R{{\mathbb R}}

\def\del{\partial}

  \newtheorem{theorem}{Theorem}[section]
  \newtheorem{lemma}[theorem]{Lemma}

  \theoremstyle{definition}

\begin{document}
\title{Small 4-regular planar graphs that are not \\ circle representable}

\author{Jane Tan\footnote{Supported by the Australian Research Council, Discovery Project DP140101519.}\\
 \small Mathematical Sciences Institute, Australian National University\\[-0.6ex]
 \small Canberra, ACT 2601, Australia\\[-0.6ex]
 \small \texttt{jane.tan@sjc.ox.ac.uk}
}
\date{}
%
\maketitle              
\vspace{-5mm}
\begin{abstract}
A 4-regular planar graph $G$ is said to be circle representable if there exists a collection of circles drawn on the plane such that the touching and crossing points correspond to the vertices of $G$, and the circular arcs between those points correspond to the edges of $G$. Lov\'asz (1970) conjectured that every 4-regular planar graph has a circle representation, but an infinite family of counterexamples was given by Bekos and Raftopoulou (2015). We reduce the order of the smallest known counterexamples among simple graphs from 822 to 68 based on a multigraph counterexample of order 12.
\end{abstract}

\section{Introduction}
A \emph{circle representation} of a 4-regular planar (multi)graph $G$ is a collection of circles embedded in $\R^2$ such that each point of the plane belongs to at most two circles, the set of points belonging to exactly two circles is in bijective correspondence with the vertex set of $G$, and the circular arcs between those points correspond to the edge (multi)set of $G$. A point in the intersection of two circles is a \emph{touching} point if it is the only point at which those circles intersect, and a \emph{crossing} point if it is one of the two points at which they intersect. We say that a graph is \emph{circle representable} if it admits a circle representation. Two examples of circle representable graphs are given in Figure~\ref{fig:exeasy}.

\begin{figure}
	\small
	\centering
	\unitlength=1cm
	 \scalebox{1}{
\begin{picture}(10.3,1.5)(0,0)
  \put(0,0){\includegraphics[scale=0.3]{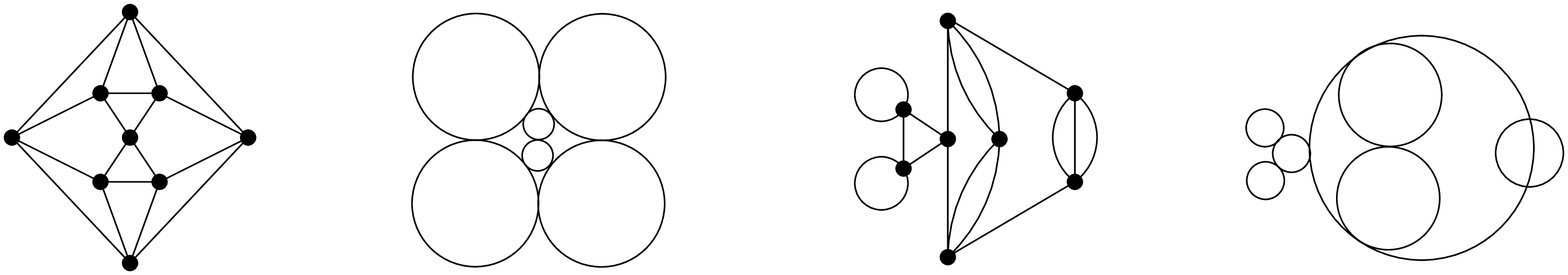}}
\end{picture}}
\caption{Circle representations of a simple graph and a multigraph.}
\label{fig:exeasy}
\end{figure}

Circle representations are closely related to the classical \emph{coin representations} of graphs, which are a collection of interior-disjoint circles in $\R^2$ such that the circles are in bijective correspondence with the vertex set, and two vertices are adjacent in $G$ if and only if their corresponding circles touch. The remarkable fact that all simple planar graphs admit a coin representation, originally proved by Koebe~\cite{koebe}, is known as the Circle Packing Theorem. Further representations of graphs involving circles and disks have since been studied in literature (see, e.g.,~\cite{CCJ90,bs,DGS14,klee}).

In 1970, Lov\'asz~\cite{lovaszq} conjectured an analogue of the Circle Packing Theorem for circle representations: that every simple 4-regular planar graph admits a circle representation. Bekos and Raftopoulou~\cite{br} showed that the conjecture holds for simple 3-connected 4-regular planar graphs as a consequence of the Circle Packing Theorem, and in the general case presented two infinite families of counterexamples, one of which consists of 2-connected graphs. To describe these, we introduce some terminology following~\cite{br}. 

Let a \emph{gadget-subgraph} be the graph shown in Figure~\ref{fig:bigcounter}(b) where the shaded loop may be replaced with any plane graph that is 4-regular except for one vertex of degree 2 on the outer face. We shall call such a graph a \emph{mini-gadget}. The octahedral mini-gadget, obtained by replacing one edge of the octahedron graph with a path of length 2, is the smallest and yields the gadget-subgraph shown in Figure~\ref{fig:bigcounter}(c). Similarly, Figures~\ref{fig:bigcounter}(d) and (e) depict \emph{bigadget-subgraphs} with abstract and octahedral \emph{mini-bigadgets} respectively. The counterexamples in~\cite{br} are then constructed by taking a subdivision of the octahedron shown in Figure~\ref{fig:bigcounter}(a), and attaching one copy of a (bi)gadget-subgraph for each dotted line in the figure by identifying the degree 2 vertices in the gadget to those in the base. Using only bigadget-subgraphs produces a 2-connected counterexample.

\begin{figure}[h]
	\small
	\centering
	\unitlength=1cm
	 \scalebox{1}{
\begin{picture}(11.2,4)(0,0)
  \put(0,0.2){\includegraphics[scale=0.24]{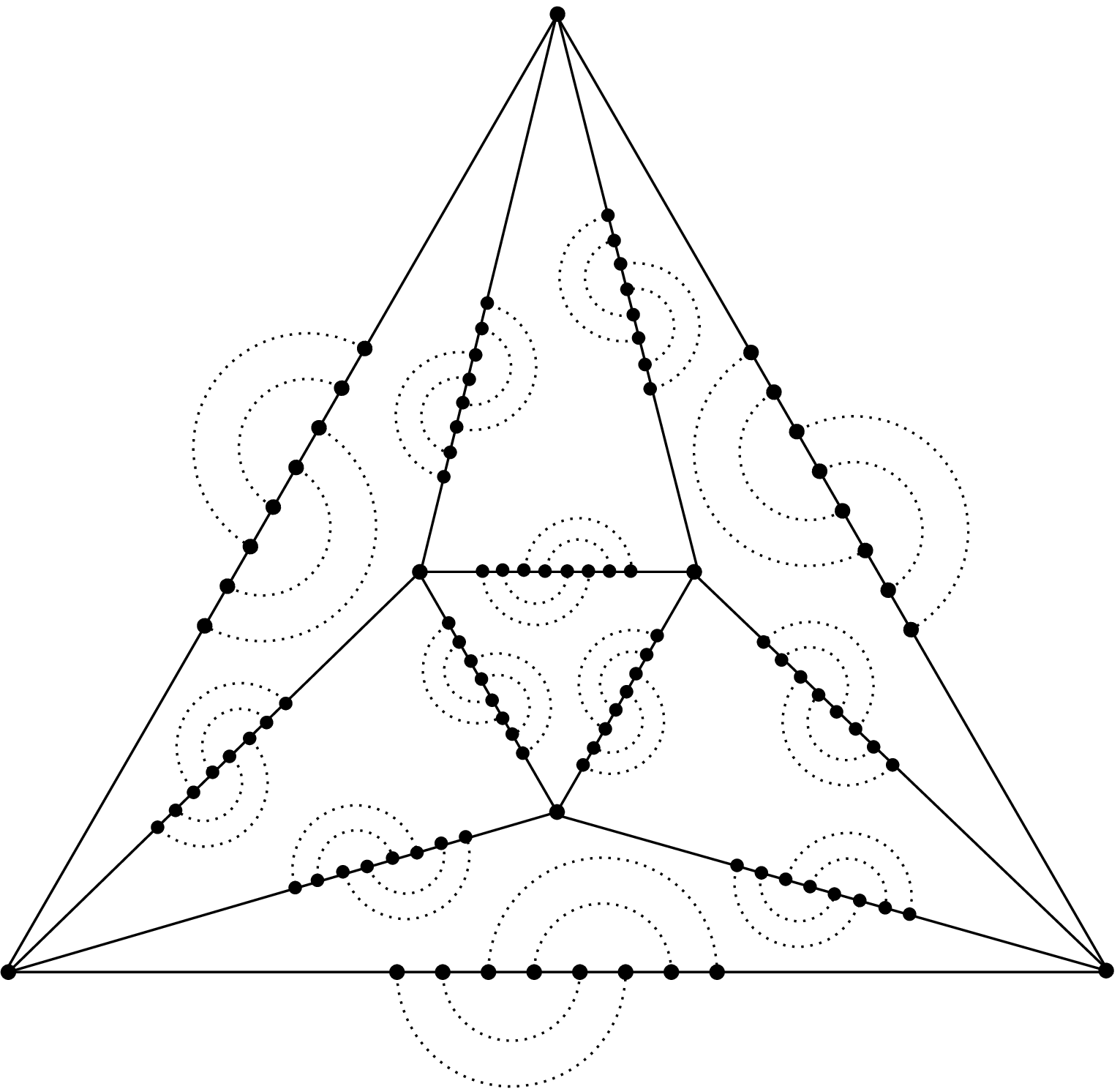}}
  \put(2.2,0){(a)}
  \put(4.7,0.6){\includegraphics[scale=0.27]{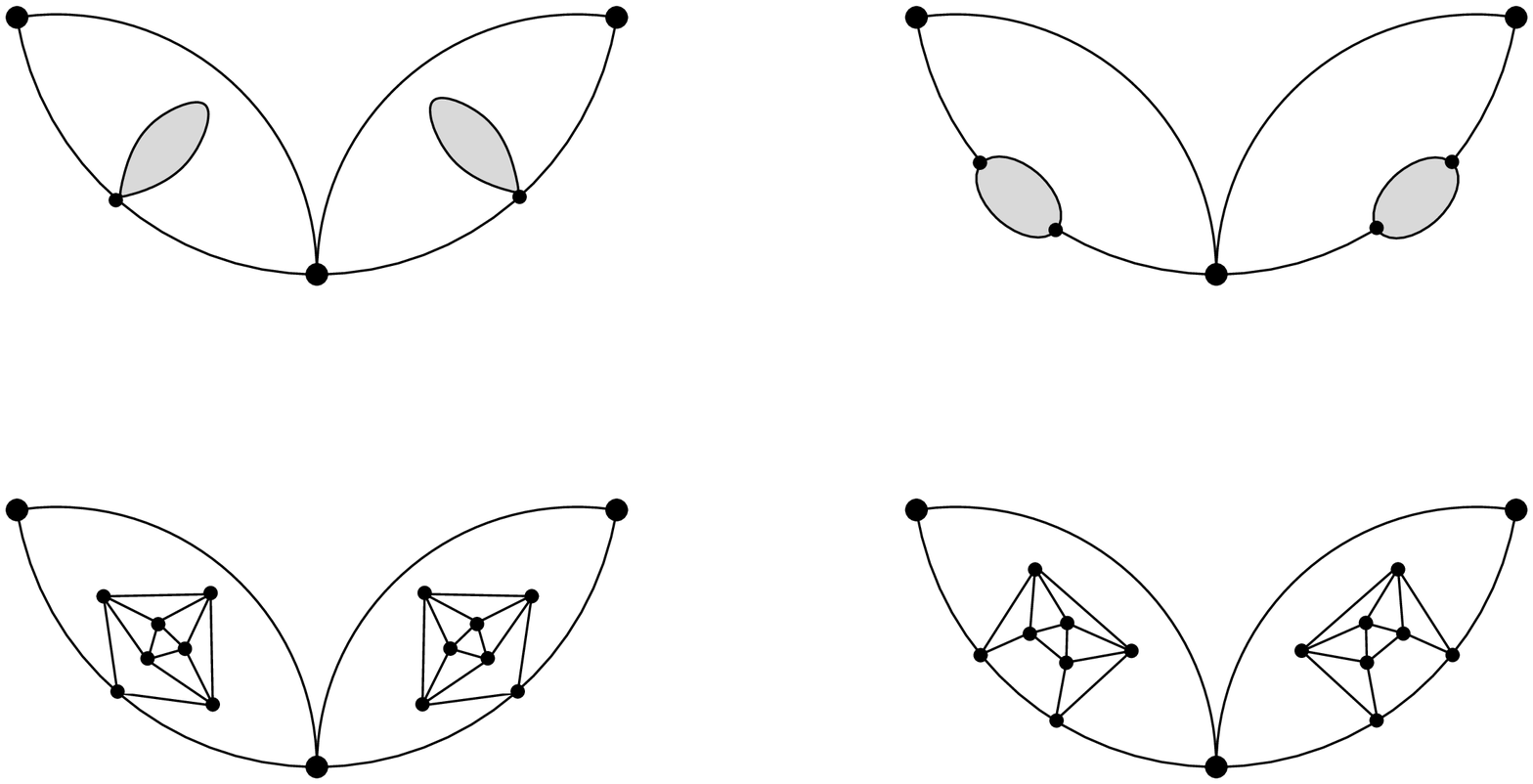}}
  \put(5.75,1.85){(b)}
  \put(5.75,0){(c)}
  \put(5.85,2.35){\scriptsize{$w$}}
  \put(4.38,3.65){\scriptsize{$v_1$}}
  \put(4.85,2.7){\scriptsize{$w_1$}}
  \put(6.8,2.7){\scriptsize{$w_2$}}
  \put(7.25,3.65){\scriptsize{$v_2$}}
  \put(9.35,1.85){(d)}
  \put(9.45,2.35){\scriptsize{$w$}}
  \put(7.98,3.65){\scriptsize{$v_1$}}
  \put(8.2,2.9){\scriptsize{$w_1$}}
  \put(8.65,2.5){\scriptsize{$w_1'$}}
  \put(10.15,2.5){\scriptsize{$w_2'$}}
  \put(10.6,2.9){\scriptsize{$w_2$}}
  \put(10.85,3.65){\scriptsize{$v_2$}}
  \put(9.35,0){(e)}
\end{picture}}
\caption{(a) The base octahedron, with dotted lines indicating placement of gadgets. Gadget-subgraphs with (b) abstract and (c) octahedral mini-gadgets are illustrated, as well as bigadget subgraphs with (d) abstract and (e) octahedral mini-bigadgets.}
\label{fig:bigcounter}
\end{figure}

The graphs so constructed all have at least 822 vertices, so the authors asked for the smallest counterexample to Lov\'asz' conjecture. As a step toward answering this question, we construct two counterexamples on 68 vertices, again one of which is 2-connected. The structure of our counterexamples is based on the configuration along one edge of Bekos and Raftopoulou's base octahedron, and in particular will use the same gadgets and mini-gadgets. The key tools that we bring are the use of M\"obius transformations (see \cite{needham} for some geometric intuition), and to work through multigraphs.

\section{A base multigraph}
Studying circle representations of multigraphs is actually somewhat easier than their simple counterparts because there are very few ways in which loops and digons can be represented. Capitalising on this, we construct the multigraph $M$ shown in Figure~\ref{fig:multicounter}(a) which consists of an 8-cycle $v_1v_2\ldots v_8v_1$ together with four pairs of neighbouring digons attached at each of $(v_1,v_4)$, $(v_2,v_7)$, $(v_3, v_6)$ and $(v_5,v_8)$. We claim that $M$ is not circle representable. 

The idea of the proof is to first show that any circle representation of $M$ must have four pairs of touching circles, this being the only way to realise the neighbouring digons, together with one additional circle representing the $8$-cycle with adjacencies as shown in Figure~\ref{fig:multicounter}(b). It then suffices to show that this configuration cannot be realised. We will handle this geometric aspect first. 

To simplify the argument, we consider an equivalent configuration obtained by applying a M\"obius transformation. Explicitly, this will be given by the inverse stereographic projection from the plane to $S^2$, followed by a rotation of the sphere that takes a chosen point to the north pole, and then a stereographic projection back to the plane. By carefully choosing a point on the circle representing the 8-cycle to be sent to infinity, the transformation maps this circle to a line and produces the following \emph{induced configuration}. Let us assume that the line in the image is the $x$-axis. The configuration then consists of eight circles $\{C_i\}_{i=1,\ldots, 8}$ where each $C_i$ has radius $r_i>0$ and touches the $x$-axis at $(t_i,0)$, numbered so that $t_1 < t_2 < \cdots < t_8$, the circles in $\{C_1,C_4,C_5,C_8\}$ are disjoint from those in $\{C_2,C_3,C_6,C_7\}$, and in addition the pairs $(C_1,C_4)$, $(C_2,C_7)$, $(C_3,C_6)$ and $(C_5,C_8)$ are touching circles. This is illustrated in Figure~\ref{fig:multicounter}(c), where the circle labelled $i$ corresponds to $C_i$.

\begin{figure}
	\small
	\centering
	\unitlength=1cm
	 \scalebox{1}{
\begin{picture}(11.7,2.6)(0,0)
  \put(0,0.5){\includegraphics[scale=0.3]{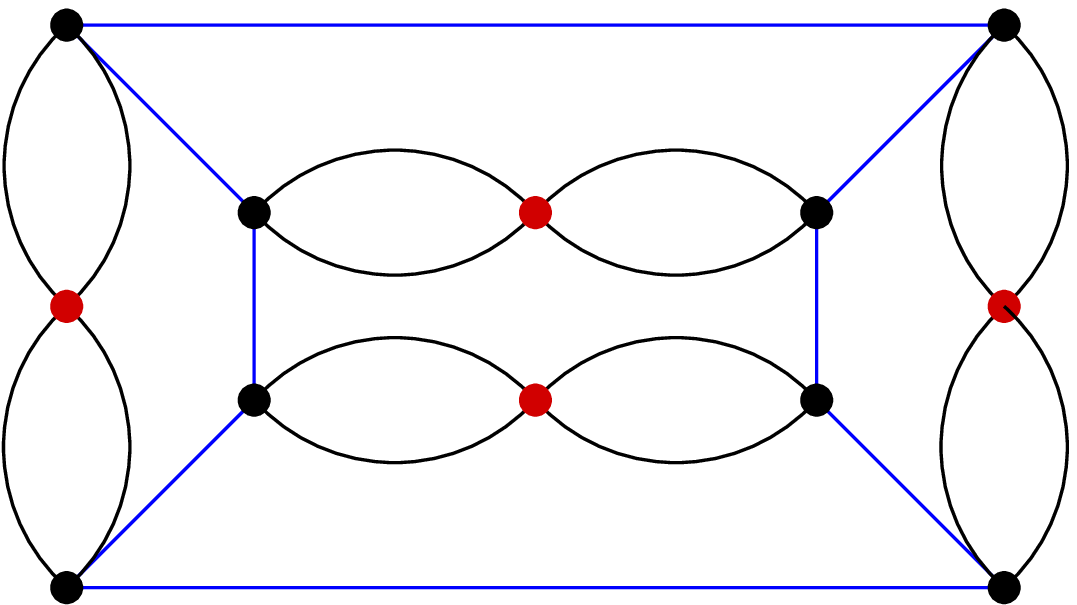}}
  \put(1.4,-0.1){(a)}
  \put(-0.15,2.35){$v_1$}
  \put(0.65,1.9){$v_2$}
  \put(0.65,0.8){$v_3$}
  \put(-0.15,0.3){$v_4$}
  \put(3.1,0.3){$v_5$}
  \put(2.3,0.8){$v_6$}
  \put(2.3,1.9){$v_7$}
  \put(3.1,2.35){$v_8$}
  \put(4.1,0.3){\includegraphics[scale=0.25]{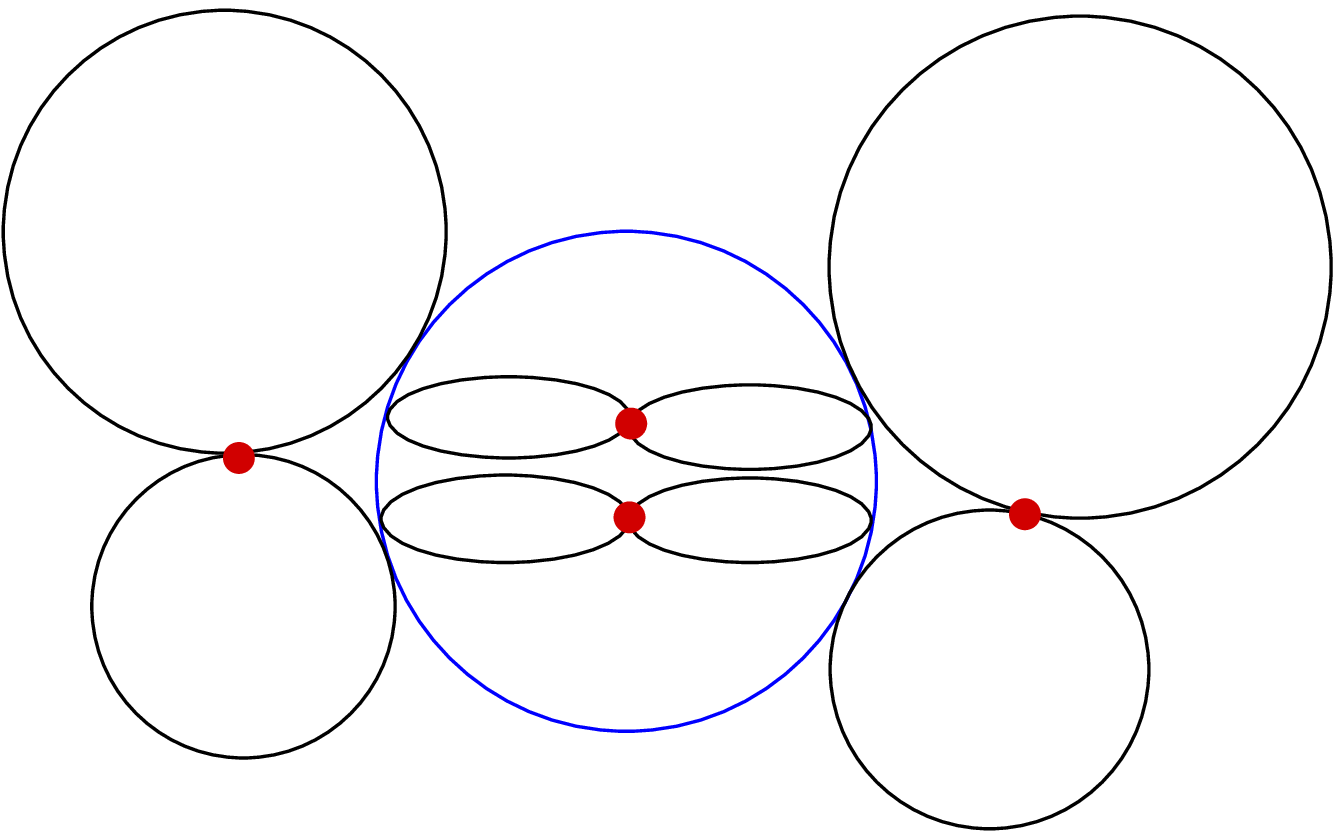}}
  \put(5.4,-0.1){(b)}
  \put(4.58,1.7){$1$}
  \put(5.32,1.28){\tiny{$2$}}
  \put(5.32,1.03){\tiny{$3$}}
  \put(4.63,0.75){$4$}
  \put(6.53,0.6){$5$}
  \put(5.94,1.03){\tiny{$6$}}
  \put(5.94,1.26){\tiny{$7$}}
  \put(6.75,1.63){$8$}
    \put(8.1,0.1){\includegraphics[scale=0.34]{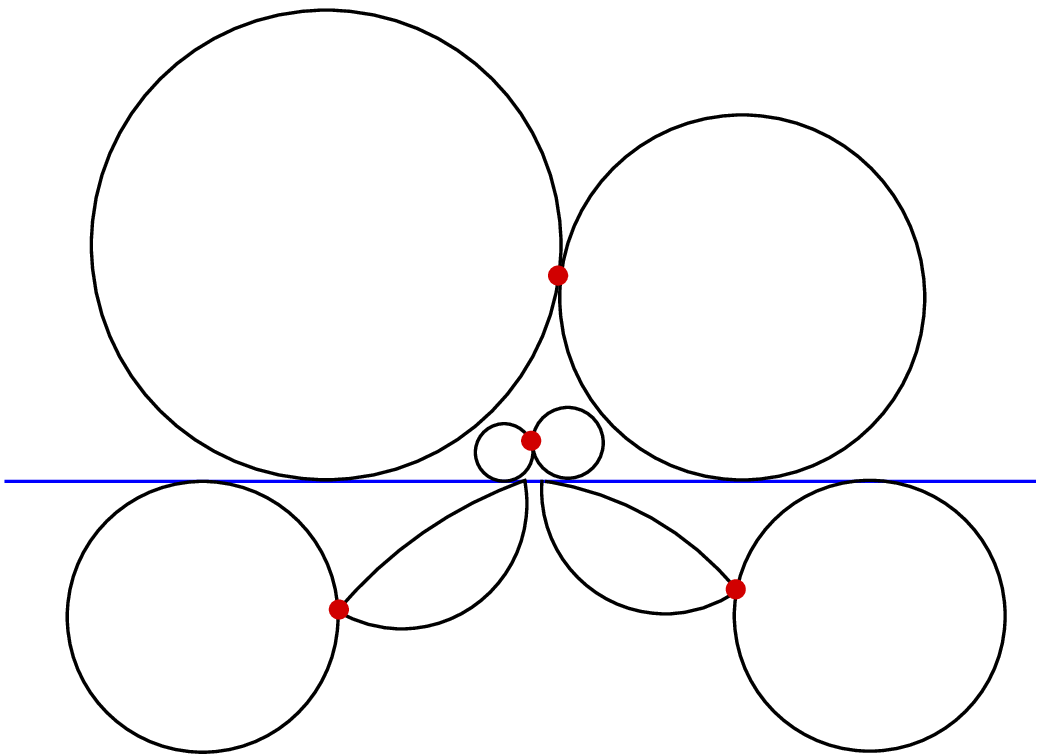}}
    \put(9.6,-0.1){(c)}
\put(8.72,0.48){$1$}
\put(9.15,1.8){$2$}
\put(9.78,1.08){\tiny{$3$}}
\put(9.58,0.7){\tiny{$4$}}
\put(10.2,0.75){\tiny{$5$}}
\put(10.0,1.12){\tiny{$6$}}
\put(10.6, 1.55){$7$}
\put(11.03,0.48){$8$}
\end{picture}}
\caption{(a) The multigraph $M$. (b) A circle configuration that must be realisable if $M$ were circle representable. (c) The induced configuration. Vertices of $M$ that are adjacent to two digons are tracked in red, and the 8-cycle in blue. Each numbered simple closed curve should be regarded as a circle.}
\label{fig:multicounter}
\end{figure}

In fact, we can be even more restrictive. Suppose we have eight circles $\{C_i(r_i,t_i)\}_{i=1,\ldots,8}$ arranged as per the induced configuration with the extra condition that $(C_1,C_8)$, $(C_4,C_5)$, $(C_2,C_3)$ and $(C_6,C_7)$ are also pairs of touching circles. We shall call this the \emph{symmetric configuration}.
\begin{lemma}\label{lemma:impossible2}
The induced configuration can be realised (by circles) only if the symmetric configuration can be realised.
\end{lemma}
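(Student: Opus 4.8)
The plan is to start from a realisation of the induced configuration and deform it until each of the four extra pairs $(C_1,C_8),(C_4,C_5),(C_2,C_3),(C_6,C_7)$ becomes touching, keeping the four given tangencies $(C_1,C_4),(C_2,C_7),(C_3,C_6),(C_5,C_8)$ and all required disjointnesses throughout. Two preliminary remarks help. First, for circles $C_i(r_i,t_i),C_j(r_j,t_j)$ tangent to the $x$-axis on the same side, the pair is touching, disjoint or crossing according as $(t_i-t_j)^2-4r_ir_j$ is zero, positive or negative, while two such circles on opposite sides are automatically disjoint; and tangency to the $x$-axis, tangency between circles, and disjointness are all preserved by Möbius transformations fixing the $x$-axis, which we may apply freely. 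Second, a short case analysis using the curvilinear triangles cut off by the touching pairs shows that in any realisation of the induced configuration $C_1,C_4,C_5,C_8$ lie on one side of the $x$-axis and $C_2,C_3,C_6,C_7$ on the other (otherwise one of $C_2,C_7$ would be trapped between $C_1$ and $C_4$ and the other between $C_5$ and $C_8$, preventing $(C_2,C_7)$ from touching, and similarly for $(C_3,C_6)$). Hence the cross-disjointnesses are automatic, and the symmetric configuration to be produced is precisely a ``$4$-cycle'' of mutually touching circles $C_1{-}C_4{-}C_5{-}C_8{-}C_1$ tangent to the axis on one side, a second such $4$-cycle $C_2{-}C_3{-}C_6{-}C_7{-}C_2$ on the other, the two quadruples disjoint, with tangency points in the order $t_1<\dots<t_8$.

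The basic moves are slides through pencils. Moving $C_1$ through the one-parameter family of circles tangent to the $x$-axis and to $C_4$ (so $r_1=(t_4-t_1)^2/4r_4$), the quantity $(t_8-t_1)^2-4r_1r_8$ varies continuously and monotonically, so by the intermediate value theorem it passes through $0$ — making $(C_1,C_8)$ touching — unless a competing constraint, namely the disjointness $C_1\cap C_5$ or the order $t_1<t_2$, fails first. There are analogous slides for $C_8$ (pencil with $C_5$), for $C_4$ or $C_5$ aimed at the tangency $C_4{-}C_5$, and for $C_3,C_7$ with respect to the lower quadruple; each moves a single circle, hence preserves all four given tangencies, and — the cross-disjointnesses being automatic — the only obstructions are within one quadruple. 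The difficulty, which I expect to be the main obstacle, is that the four moves are coupled: a single slide need not by itself reach a tangency (for instance the $C_1$-slide never does if $r_8=r_4$), and a slide performed to create one extra tangency can push the configuration out of admissible range for the others or destroy the interleaving $t_1<\dots<t_8$, so the four tangencies cannot simply be produced one after another.

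To get around the coupling I would argue extremally. After a Möbius normalisation pinning three of the $t_i$, the set $\mathcal X$ of realisations of the induced configuration lies in a fixed coordinate chart and is nonempty by hypothesis. Using the four given tangencies together with the disjointnesses and the pinned points, one shows that on $\mathcal X$ all radii $r_i$ and all gaps $t_{i+1}-t_i$ stay in a fixed compact range, so that the total defect $F=\sum\bigl[(t_i-t_j)^2-4r_ir_j\bigr]^2$ over the four extra pairs attains its infimum at an honest (nondegenerate) circle configuration in $\mathcal X$. The crux is to show $F=0$ there: if some extra pair still had nonzero defect at the minimiser, one of the pencil slides above — if necessary combined with a compensating slide keeping the four given tangencies intact — would furnish an admissible first-order direction strictly decreasing $F$ without violating any strict disjointness, contradicting minimality. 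Establishing that such an improving, disjointness-respecting direction always exists at a configuration with positive defect, and that the compactness estimates genuinely preclude the minimiser degenerating (a circle shrinking to a point or two tangency points colliding), is where the real work of the lemma lies, and where the hypothesised existence of a realisation — and the side-separation it forces — are used; once $F=0$ we have the symmetric configuration.
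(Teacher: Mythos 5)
Your write-up correctly sets up the geometry (the relation $(t_i-t_j)^2=4r_ir_j$ for touching circles on the same side of the axis, the side-separation of the two quadruples, the invariance under M\"obius maps fixing the axis), but the proof itself is not there: you reduce the lemma to two claims --- that the defect functional $F$ attains its infimum at a nondegenerate configuration in $\mathcal X$, and that any minimiser with positive defect admits an admissible strictly improving direction --- and then explicitly defer both (``is where the real work of the lemma lies''). Those two claims \emph{are} the lemma, so as it stands this is a plan, not a proof. Moreover the compactness step is genuinely in doubt: $\mathcal X$ is cut out by strict inequalities (disjointness and the ordering of the $t_i$), and even after pinning three tangent points by a M\"obius normalisation nothing obviously prevents a gap $t_{i+1}-t_i$ from tending to $0$ or a radius from tending to $0$ or $\infty$ along a minimising sequence, so the infimum of $F$ could a priori be attained only at a degenerate limit. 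You would need concrete a priori bounds, which you assert but do not derive.

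The coupling you identify as the main obstacle --- that a slide creating one tangency may spoil the range for the others --- is in fact the point where a much simpler direct construction is available, and it is the one the paper uses. Above the axis, keep $C_2$ and $C_6$ fixed (one circle from each of the existing touching pairs $(C_2,C_7)$ and $(C_3,C_6)$) and replace $C_3$ and $C_7$ by the circles tangent to the axis \emph{and to both} $C_2$ and $C_6$, in the appropriate positions. This creates the two missing tangencies $(C_2,C_3)$ and $(C_6,C_7)$ and preserves the two existing ones in a single step; the relation $t_j-t_i=2\sqrt{r_ir_j}$ shows $r_3'>r_3$ and $r_7'<r_7$, whence $t_2<t_3'<t_3<t_4$ and $t_6<t_7'<t_7<t_8$, so the ordering survives, and cross-disjointness is automatic since the two quadruples lie on opposite sides of the axis. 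The same move below the axis (after first enlarging $C_1$, keeping its tangencies, so that $r_1>r_5$ --- needed for the outer Apollonius circle replacing $C_8$ to exist) finishes the construction. No variational or compactness argument is required.
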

\begin{proof}
Let $\{C_i(r_i,t_i)\}_{i=1,\ldots,8}$ be in the induced configuration, with $C_2$, $C_3$, $C_6$, and $C_7$ above the axis. Holding $C_2$ and $C_6$ fixed, replace $C_3$ and $C_7$ with two new circles, say $C_3'$ and $C_7'$, that are both tangent to $C_2$, $C_6$ and the axis. Then $r_3' > r_3$, which implies that $t_6-t_3 = 2\sqrt{r_3r_6} < 2\sqrt{r_3'r_6}<t_6-t_3'$, and hence $t_3' < t_3<t_4$. Here, we are using the fact that $C_3$ and $C_6$ are tangent circles that both touch the same line. Also, we know that $t_2<t_3'$ by the choice of $C_3'$ being tangent to $C_2$ and $C_6$, so this replacement preserves the order of the circles. Similarly, we have $r_7' < r_7$ from which we deduce that $t_6<t_7'<t_7<t_8$. Since the inequalities are strict at each step, no circle below the axis is tangent to any circle above the axis. The adjustments below the axis are similar. By possibly replacing $C_1$ with a larger circle with the same tangencies, we may assume that $r_1 > r_5$. Then, fixing $C_1$ and $C_5$ and replacing $C_4$ and $C_8$ produces the symmetric configuration.
\end{proof}

The following lemma contains the key geometric properties satisfied by the systems of four circles above and below the axis in the symmetric configuration.
\begin{lemma}\label{lemma:magic}
Suppose we have four circles $\{C_i(r_i,t_i)\}_{i=1,2,3,4}$ in the plane with radii $r_i>0$ and which are tangent to the $x$-axis at points $(t_i,0)$ respectively, and assume the circles are numbered so that $t_1<t_2<t_3<t_4$. In addition, suppose that $C_1$ is tangent to $C_2$ and $C_4$, and $C_3$ is tangent to $C_2$ and $C_4$. Let $n=t_4-t_1$, $m= t_3-t_2$, $\ell=t_2-t_1$ and $r=t_4-t_3$ (see Figure~\ref{fig:magic}). Then: 
\vspace{-1mm}
\begin{enumerate}[label=(\roman*)]
\item $m\cdot n = \ell\cdot r$, \vspace{-1mm}
\item $m$ is determined by $\ell$ and $r$, where $m= f(\ell,r):= \frac{-(\ell+r)+ \sqrt{(\ell+r)^2+4\ell r}}{2}$ for $\ell, r > 0$,
\item $f(\ell,r)$ is increasing in both $\ell$ and $r$ for $\ell, r > 0$.
\end{enumerate}
\end{lemma}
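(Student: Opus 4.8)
The plan is to reduce the entire lemma to one elementary fact about circles tangent to a common line, which is essentially the computation already used in the proof of Lemma~\ref{lemma:impossible2}: if two circles of radii $\rho_1,\rho_2>0$ are both tangent to a line $L$ at \emph{distinct} points a distance $d$ apart and are tangent to each other, then they lie on the same side of $L$, the tangency is external, and $d=2\sqrt{\rho_1\rho_2}$. (Quick check: taking $L$ to be the $x$-axis, the centres are at $(0,\pm\rho_1)$ and $(d,\pm\rho_2)$; if the circles are on opposite sides, the distance between centres is $\sqrt{d^2+(\rho_1+\rho_2)^2}>\rho_1+\rho_2$, so they cannot be tangent at all; on the same side the distance is $\sqrt{d^2+(\rho_1-\rho_2)^2}$, and equating this to $\rho_1+\rho_2$ forces $d^2=4\rho_1\rho_2$, whereas equating it to $|\rho_1-\rho_2|$ is impossible for $d\neq 0$.)

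First I would apply this fact to the four tangent pairs in the hypothesis, namely $\{C_1,C_2\}$, $\{C_2,C_3\}$, $\{C_3,C_4\}$, $\{C_4,C_1\}$. Since $C_1$ is tangent to both $C_2$ and $C_4$ and $C_3$ is tangent to $C_2$, all four circles lie on the same side of the $x$-axis; reflecting if necessary, assume they lie above it. The four tangencies then read
\[
\ell=2\sqrt{r_1r_2},\qquad m=2\sqrt{r_2r_3},\qquad r=2\sqrt{r_3r_4},\qquad n=2\sqrt{r_1r_4}.
\]
Part (i) is now immediate: $m\cdot n=4\sqrt{r_1r_2r_3r_4}=\ell\cdot r$. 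For part (ii), the ordering $t_1<t_2<t_3<t_4$ gives $n=(t_2-t_1)+(t_3-t_2)+(t_4-t_3)=\ell+m+r$, and substituting into $mn=\ell r$ yields the quadratic $m^2+(\ell+r)m-\ell r=0$; its unique positive root is exactly $f(\ell,r)$, and since $m=t_3-t_2>0$ this forces $m=f(\ell,r)$, so in particular $m$ is determined by $\ell$ and $r$. For part (iii), note that $f(\ell,r)=f(r,\ell)$, so it suffices to show monotonicity in $\ell$; writing the discriminant as $(\ell+r)^2+4\ell r=\ell^2+6\ell r+r^2$, one computes
\[
\frac{\partial f}{\partial \ell}=\frac12\!\left(\frac{\ell+3r}{\sqrt{\ell^2+6\ell r+r^2}}-1\right),
\]
which is positive because $(\ell+3r)^2-(\ell^2+6\ell r+r^2)=8r^2>0$. (Alternatively, rationalising gives $f(\ell,r)=\dfrac{2\ell r}{(\ell+r)+\sqrt{(\ell+r)^2+4\ell r}}$, so with $a=1/\ell$ and $b=1/r$ one has $1/f=\tfrac12\big(a+b+\sqrt{(a+b)^2+4ab}\big)$, which is visibly increasing in $a$ and $b$, hence $f$ is decreasing in $1/\ell$ and $1/r$, i.e.\ increasing in $\ell$ and $r$.)

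I do not anticipate a genuine obstacle here. The only points needing care are verifying that each tangent pair of circles lies on a single side of the axis and is externally tangent, so that the relation $d=2\sqrt{\rho_1\rho_2}$ applies (this is what forces the four displayed equations), and selecting the correct positive root of the quadratic in part (ii). Everything after that is routine algebra, and part (iii) is a one-line sign check whichever of the two computations above one prefers.
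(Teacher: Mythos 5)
Your proof is correct and follows essentially the same route as the paper: the relation $t_j-t_i=2\sqrt{r_ir_j}$ for tangent circles touching a common line gives (i), substituting $n=\ell+m+r$ and taking the positive root gives (ii), and the sign of $\partial f/\partial\ell$ gives (iii). The only difference is that you explicitly justify the same-side/external-tangency claim underlying $d=2\sqrt{\rho_1\rho_2}$, which the paper simply reads off from Figure~\ref{fig:magic}.
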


\begin{figure}[h]
	\centering
	\scalebox{0.8}{
	\small
\includegraphics[scale=0.6]{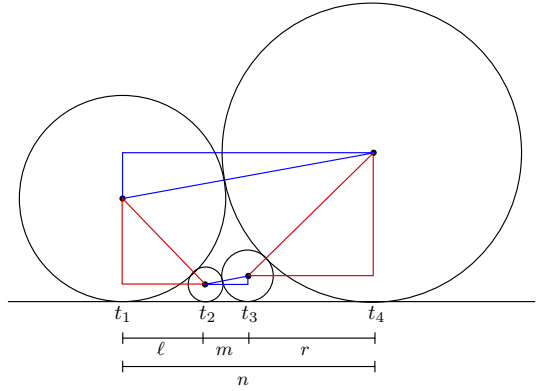}
		\put(-200,26){$t_1$}
		\put(-160,26){$t_2$}
		\put(-140,26){$t_3$}
		\put(-80,26){$t_4$}
		\put(-180,9){\footnotesize{$\ell$}}
		\put(-152,9){\footnotesize{$m$}}
		\put(-112,9){\footnotesize{$r$}}
		\put(-142,-5){\footnotesize{$n$}}
		}
\caption{Configuration of circles in Lemma \ref{lemma:magic}.}
\label{fig:magic}
\end{figure}

\begin{proof}
From Figure~\ref{fig:magic}, we observe that
\[
(t_4-t_1)^2(t_3-t_2)^2 = (4r_1r_4)(4r_2r_3) = (4r_1r_2)(4r_3r_4) = (t_2-t_1)^2(t_4-t_3)^2,
\]
which implies (i).

For (ii) we can substitute $n=\ell+m+r$ into (i) to obtain the equation $\ell r = m(\ell + m + r)$. Solving for $m$ gives the expression claimed where, since we are dealing with lengths, the only positive root of the quadratic equation has been chosen.

Using the explicit expression for $m$ as a function of $\ell$ and $r$, we can compute
\begin{align*}
\frac{\del}{\del \ell} f = \frac{1}{2}\left( \frac{\ell+3r}{\sqrt{(\ell + r)^2+4\ell r}}-1\right)
\end{align*}
which is positive whenever $\ell$ and $r$ are both positive. By symmetry, it is also true that $\frac{\del}{\del r}f$ is positive whenever $\ell$ and $r$ are both positive, which completes the proof of (iii).
\end{proof}

\begin{lemma}\label{lemma:impossible}
The symmetric and induced configurations cannot be realised.
\end{lemma}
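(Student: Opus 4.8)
The plan is to apply Lemma~\ref{lemma:impossible2} to reduce to the symmetric configuration, and then to derive a contradiction by playing off the two groups of four circles on opposite sides of the axis against each other via Lemma~\ref{lemma:magic}.

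By Lemma~\ref{lemma:impossible2} it suffices to show the symmetric configuration is not realisable, so I would suppose for contradiction that $\{C_i(r_i,t_i)\}_{i=1,\ldots,8}$ is such a configuration, with $t_1<\cdots<t_8$, the circles $C_2,C_3,C_6,C_7$ lying above the $x$-axis and $C_1,C_4,C_5,C_8$ below it. Among these eight circles we have the tangent pairs $(C_1,C_4)$, $(C_2,C_7)$, $(C_3,C_6)$, $(C_5,C_8)$ coming from the induced configuration, together with $(C_1,C_8)$, $(C_4,C_5)$, $(C_2,C_3)$, $(C_6,C_7)$ coming from the symmetric condition; note that each such pair lies on one side of the axis.

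I would then observe that each of the two groups of four circles is an instance of the configuration of Lemma~\ref{lemma:magic}. The circles below the axis, in increasing order of their tangency points, are $C_1,C_4,C_5,C_8$, and the tangencies among them include $C_1C_4$, $C_4C_5$, $C_5C_8$ and $C_1C_8$; in particular the first and third of these circles ($C_1$ and $C_5$) are each tangent to the second and fourth ($C_4$ and $C_8$), which is precisely the hypothesis of Lemma~\ref{lemma:magic}. Setting $\ell_-=t_4-t_1$, $m_-=t_5-t_4$ and $r_-=t_8-t_5$, part~(ii) of that lemma gives $m_-=f(\ell_-,r_-)$. Similarly, the circles above the axis in increasing order of tangency points are $C_2,C_3,C_6,C_7$, with $C_2$ and $C_6$ each tangent to $C_3$ and $C_7$, so Lemma~\ref{lemma:magic} gives $m_+=f(\ell_+,r_+)$ where $\ell_+=t_3-t_2$, $m_+=t_6-t_3$ and $r_+=t_7-t_6$.

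Finally I would extract the contradiction from the way the two groups interleave along the axis. From $t_1<t_2$ and $t_3<t_4$ we get $\ell_-=t_4-t_1>t_3-t_2=\ell_+$, and from $t_5<t_6$ and $t_7<t_8$ we get $r_-=t_8-t_5>t_7-t_6=r_+$; hence, by the strict monotonicity in Lemma~\ref{lemma:magic}(iii), $m_-=f(\ell_-,r_-)>f(\ell_+,r_+)=m_+$. On the other hand $t_3<t_4$ and $t_5<t_6$ give $m_-=t_5-t_4<t_6-t_3=m_+$, a contradiction. Therefore the symmetric configuration, and with it the induced configuration, cannot be realised. The argument is short once it is set up; the point that needs care --- and which I regard as the crux --- is to recognise that after relabelling, both groups of four circles genuinely satisfy the hypotheses of Lemma~\ref{lemma:magic}, and that the middle gap $m_\pm$ and the outer gaps $\ell_\pm, r_\pm$ are ordered in opposite senses between the two groups, which is exactly what makes the two resulting constraints $m_\pm=f(\ell_\pm,r_\pm)$ incompatible.
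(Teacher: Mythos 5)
Your proof is correct and follows essentially the same route as the paper: apply Lemma~\ref{lemma:magic} to the two interleaved quadruples $\{C_1,C_4,C_5,C_8\}$ and $\{C_2,C_3,C_6,C_7\}$, use monotonicity of $f$ to get $m_->m_+$ from $\ell_->\ell_+$ and $r_->r_+$, and contradict the direct inequality $m_-<m_+$ from the interleaving. The only cosmetic difference is notation ($m_\pm$ versus the paper's $m,m'$).
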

\begin{proof}
Let $\{C_i(r_i,t_i)\}_{i=1,\ldots,8}$ be in the symmetric configuration, and note that Lemma~\ref{lemma:magic} applies to both $\{C_1,C_4,C_5,C_8\}$ and $\{C_2,C_3,C_6,C_7\}$. Let $\ell = t_3-t_2$, $m=t_6-t_3$, $r = t_7-t_6$, $\ell' = t_4-t_1$, $m'=t_5-t_4$, and $r' = t_8-t_5$. Then with $f$ as defined in Lemma~\ref{lemma:magic}(ii), we can write $m= f(\ell,r)$ and $m'=f(\ell',r')$. 

Since $t_1<t_2 < t_3 <t_4$, we have that $\ell < \ell'$. As $f$ is increasing in both variables by Lemma~\ref{lemma:magic}(iii), it follows that $m=f(\ell, r) < f(\ell', r)$. Similarly, from $t_5<t_6 < t_7 <t_8$ we obtain the inequality $r<r'$, and hence $f(\ell', r) < f(\ell', r')$ where the right hand side is now just $m'$. Altogether, this means that $m < m'$. However, we also have $t_3<t_4<t_5<t_6$ which implies that $m>m'$, giving a contradiction. This shows that the symmetric configuration cannot be realised by circles, so by Lemma~\ref{lemma:impossible2}, the induced configuration cannot be realised by circles either.
\end{proof}

\begin{theorem} \label{multicounter}
The multigraph $M$ shown in Figure~\ref{fig:multicounter}(a) is not circle representable.
\end{theorem}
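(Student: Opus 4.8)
The plan is to show that a circle representation of $M$ would, after a Möbius transformation, yield a realisation of the induced configuration, contradicting Lemma~\ref{lemma:impossible}. Write $M$ as the $8$-cycle $v_1v_2\cdots v_8$ together with, for each pair $\{i,j\}\in\big\{\{1,4\},\{2,7\},\{3,6\},\{5,8\}\big\}$, a new degree-$4$ vertex $u_{ij}$ joined to $v_i$ by a digon and to $v_j$ by a digon; suppose $M$ has a circle representation. The first step is to see that the representation is essentially forced. Each vertex $u_{ij}$ lies on two circles, and the four arcs leaving it have other endpoints $v_i,v_i,v_j,v_j$ in some order. Using that two distinct circles meet in at most two points and that every point lying on two circles is a vertex, one rules out the case in which a single circle through $u_{ij}$ carries an arc to $v_i$ and an arc to $v_j$, since that would force both circles through $u_{ij}$ to also pass through $v_i$ and $v_j$. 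Hence one of the two circles through $u_{ij}$, call it $P_i$, meets the rest of the configuration only at $v_i$, so its two arcs are exactly the two edges of the digon $v_iu_{ij}$; likewise there is a circle $P_j$ carrying the digon $v_ju_{ij}$, and $P_i,P_j$ are tangent at $u_{ij}$. This produces eight ``petals'' $P_1,\dots,P_8$, with $P_i$ through $v_i$; the \emph{other} circle through $v_i$ must then carry the two $8$-cycle edges at $v_i$, and since the edge $v_iv_{i+1}$ lies on a unique circle, all these other circles coincide in one circle $Z$ through $v_1,\dots,v_8$ in cyclic order, whose eight arcs form the $8$-cycle. Since two petals have disjoint vertex sets unless they are one of the four pairs $(P_1,P_4),(P_2,P_7),(P_3,P_6),(P_5,P_8)$, the representation is exactly: $Z$; petals $P_i$ tangent to $Z$ at $v_i$; and those four pairs of mutually tangent petals.

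Next I would determine which side of $Z$ each petal lies on. As $Z$ is a Jordan curve, each petal lies in one of its two closed complementary regions, and two petals sharing a $u_{ij}$ lie on the same side. Since $M$ is $2$-connected and planar, its embedding is unique up to reflection; equivalently, the chords $v_1v_4,v_2v_7,v_3v_6,v_5v_8$ of the octagon must be distributed between the two sides of $Z$ with no two chords on the same side interleaving, and since the interleaving graph of these four chords is a $4$-cycle, the only valid split is $\{v_1v_4,v_5v_8\}$ against $\{v_2v_7,v_3v_6\}$. Thus $P_1,P_4,P_5,P_8$ lie on one side of $Z$ and $P_2,P_3,P_6,P_7$ on the other.

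Finally, pick a point $p$ on the arc of $Z$ strictly between $v_8$ and $v_1$ (it lies on no petal) and apply the Möbius transformation described earlier that sends $p$ to $\infty$: then $Z$ becomes a line, taken to be the $x$-axis; each petal becomes a positive-radius circle $C_i$ tangent to the axis at a point $(t_i,0)$; tangencies and disjointnesses are preserved; the two sides of $Z$ become the two half-planes; and the cyclic order $p,v_1,\dots,v_8$ on $Z$ becomes a linear order of the $t_i$. Reflecting in a vertical line and in the $x$-axis if necessary, we may take $t_1<\cdots<t_8$ with $C_2,C_3,C_6,C_7$ above the axis, and then we have exactly the induced configuration — contradicting Lemma~\ref{lemma:impossible}. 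Hence $M$ is not circle representable. I expect the main obstacle to be the first step: deducing, from only the defining axioms of a circle representation, that the digons and the $8$-cycle admit essentially no realisation beyond the petals-on-a-circle picture — in particular, excluding a digon realised by two arcs on a pair of crossing circles, and excluding the $8$-cycle being spread over several circles.
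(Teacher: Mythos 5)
Your proof is correct and follows essentially the same route as the paper's: force each pair of neighbouring digons onto a tangent pair of ``petal'' circles, force the $8$-cycle onto a single circle tangent to the petals at the $v_i$, then apply a M\"obius transformation sending a point of the $v_8v_1$ arc to infinity to obtain the induced configuration, contradicting Lemma~\ref{lemma:impossible}. The only divergence is in fixing which side of $Z$ each petal lies on --- the paper invokes the unique embedding of $M$ (its underlying structure being a subdivision of the $3$-connected cube graph), whereas you argue directly via the interleaving graph of the four chords being a $4$-cycle; your parenthetical claim that $2$-connectedness alone forces a unique embedding is not correct, but your self-contained interleaving argument makes it unnecessary.
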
 
\begin{proof}
We first observe that $M$ has a unique embedding on the sphere, since the cube graph is 3-connected. In addition, given a circle representation of a graph, one can obtain circle representations with any choice of outer face by applying a M\"obius transformation that sends an interior point of that face to infinity. Thus, it is enough to show that the chosen embedding of $M$ shown in Figure~\ref{fig:multicounter}(a) does not have a circle representation.

Let $\{v_i\}_{i=1,\ldots 8}$ be the set of vertices in $M$ incident to only one digon, numbered so that $S=v_1\ldots v_8v_1$ is the cycle consisting of simple edges (see Figure~\ref{fig:multicounter}(a)). Any pair of neighbouring digons sharing exactly one vertex must be realised by two circles that touch at that common vertex. This is because if one of the digons were produced by crossing circles, then the edges of the neighbouring digon are realised by arcs of the same two circles which is only possible if the two digons share both of their vertices. Hence, $M$ must have one circle representing each digon. This means that one of the circles on which $v_i$ lies corresponds to a digon for each $i=1,2,\ldots, 8$, so the edges of $S$ incident to $v_i$ lie on the same circle. Stringing this together, we find that all of the simple edges lie on the same circle, so $S$ must be represented by a single circle, say $C_0$. Furthermore, if a pair of vertices that do not form a 2-cut are joined by exactly two parallel edges, then the parallel edges must appear consecutively in the cyclic ordering at both of those vertices. From this we conclude that each $v_i$ is a touching point.

Now suppose a circle representation of $M$ exists. It must have one circle $C_0$ corresponding to $S$, and then 8 more circles $C_1,\ldots,C_8$ labelled so that $v_i$ is the tangent point of $C_i$ with $C_0$, and these points occur in the cyclic order around $C_0$. In addition, $(C_1,C_4)$, $(C_2,C_7)$, $(C_3,C_6)$ and $(C_5,C_8)$ are pairs of touching circles. Then by applying a M\"obius transformation that sends a point on the open arc of $C_0$ representing the edge $v_1v_8$ to infinity, we would obtain a realisation of the induced configuration, thereby contradicting Lemma~\ref{lemma:impossible}. 
\end{proof}

\section{Small simple counterexamples}
From our base multigraph $M$, we proceed to construct simple counterexamples by subdividing one edge of each digon to obtain a simple graph, and then attaching gadgets at the degree 2 vertices to ensure 4-regularity. Taking eight copies of either the octahedral mini-gadget or the octahedral mini-bigadget as our gadgets gives the graphs of order 68 shown in Figure~\ref{smallcounter}. 

\begin{figure}[h]
	\small
	\centering
	\unitlength=1cm
	 \scalebox{1}{
\begin{picture}(11.9,3.2)(0,0)
  \put(0,0){\includegraphics[scale=0.2]{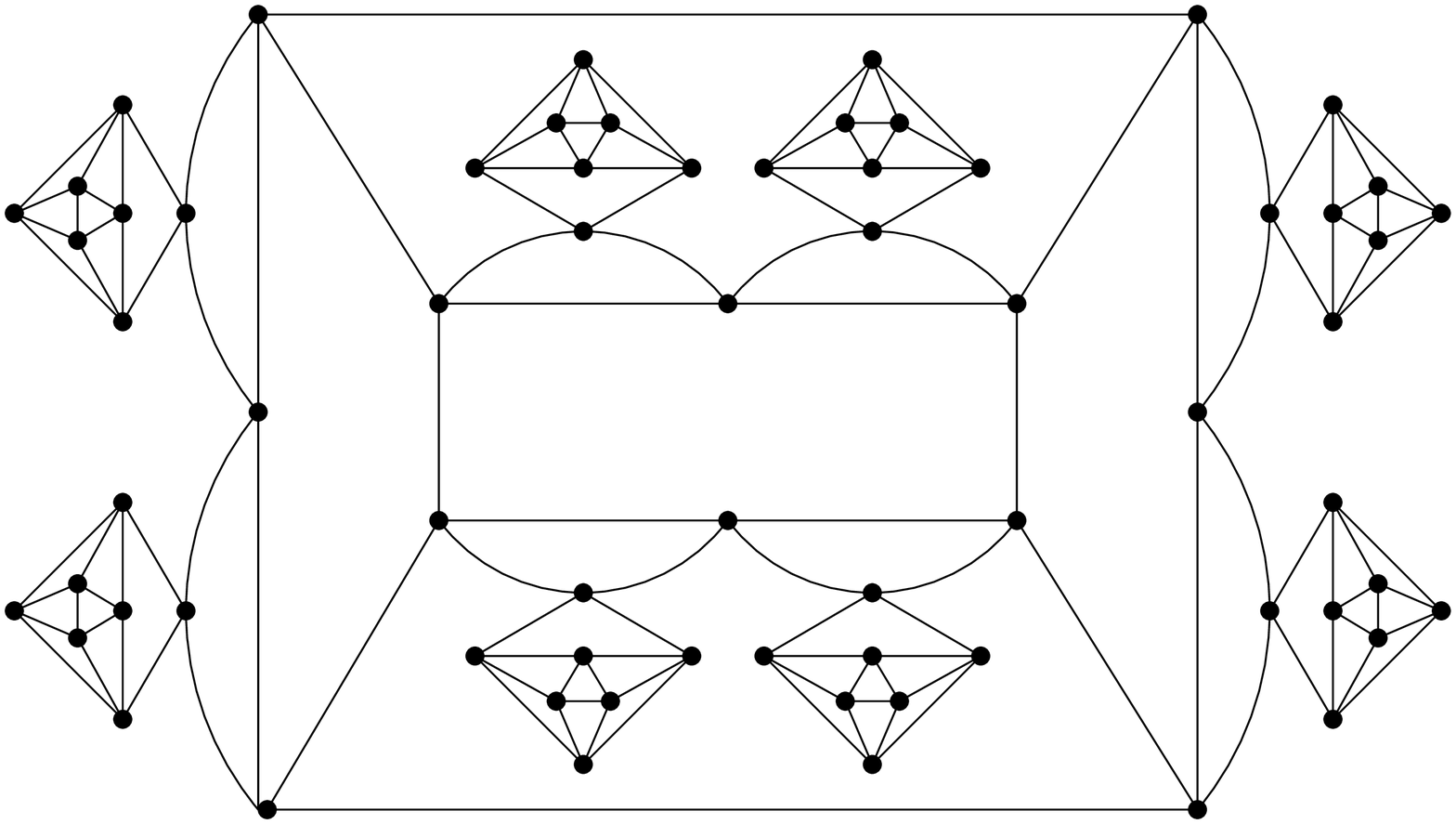}}
  \put(6.5,0){\includegraphics[scale=0.2]{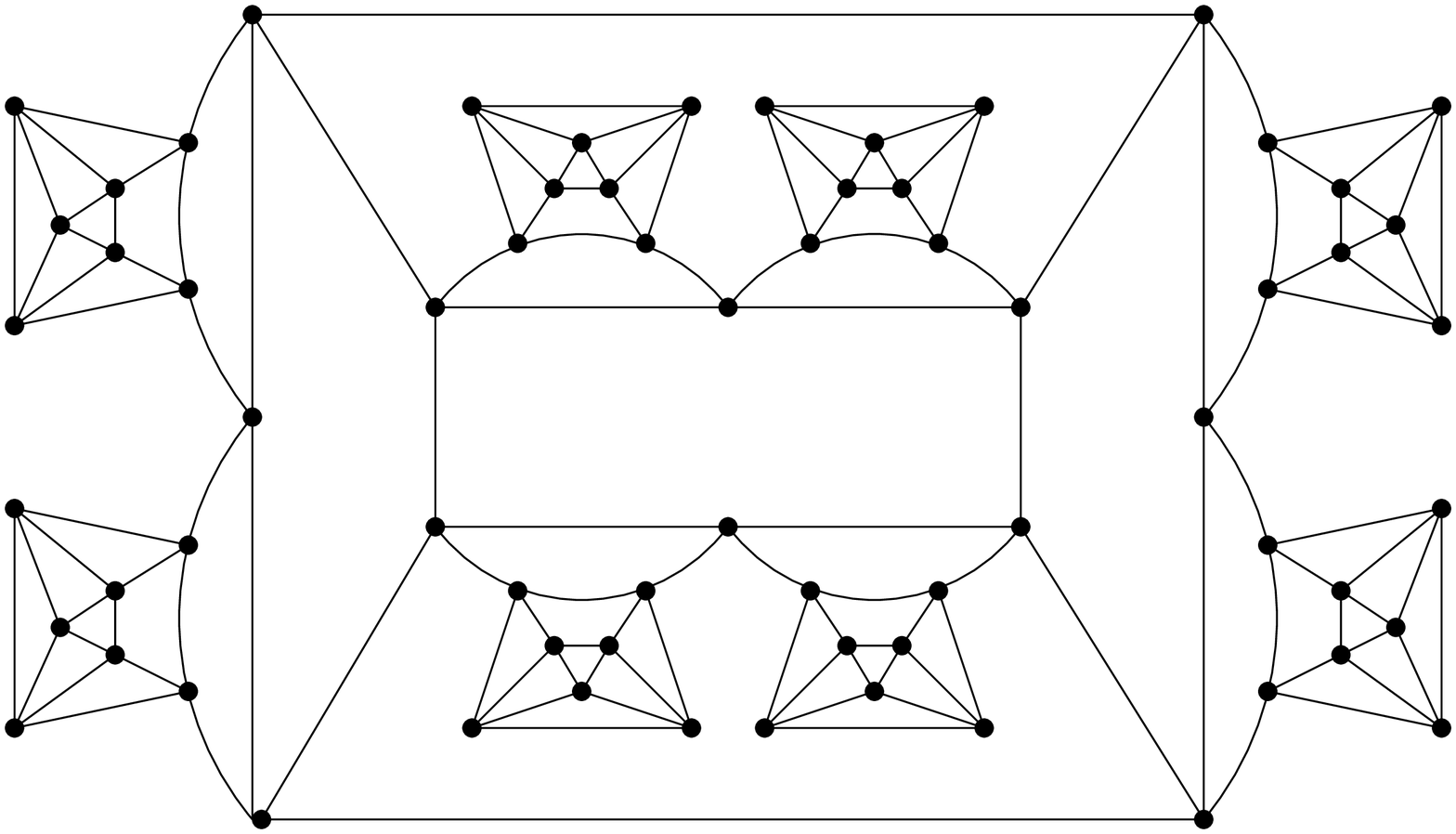}}
\end{picture}}
\caption{Simple graphs on $68$ vertices that are not circle representable.}
\label{smallcounter}
\end{figure}

\begin{lemma}[\cite{br}, Lemmas 5 and 12] \label{prune2}
Let $G$ be a 4-regular planar graph with at least one copy of the gadget-subgraph or bigadget-subgraph, labelled as in Figure~\ref{fig:bigcounter}. Then in any circle representation of $G$, each circle that contains an edge of a mini-(bi)gadget consists exclusively of edges belonging to that mini-(bi)gadget except for possibly, in the bigadget case, one circle containing the path $w_iv_iww_i'$ and another containing $w_iw_i'$ ($i=1$ or $2$).
\end{lemma}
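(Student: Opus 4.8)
The plan is to trace individual circles of a representation through $G$ and exploit the near-$4$-regularity of the mini-(bi)gadget to keep them trapped inside it. Write $\Gamma$ for the given (bi)gadget-subgraph and $H$ for its mini-(bi)gadget, and recall that in $G$ every vertex of $H$ except the distinguished degree-$2$ vertex $w$ has all four of its incident edges in $H$, while $w$ has exactly two edges in $H$, its other two edges (to $w_1$ and $w_2$) leaving $H$. The only properties of circle representations I will need are that each vertex of $G$ lies on exactly two circles, which partition the four edges at that vertex into two pairs according to the circle that carries each edge, and that every circle is a simple closed curve, hence meets any point of the plane at most once.

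I would first treat the mini-gadget case, where $w$ is the unique vertex at which a circle can leave $H$. Fix a circle representation of $G$ and let $C$ be a circle through some edge $e$ of $H$. Following $C$ away from $e$ in either direction, every vertex reached lies in $V(H)$, and as long as that vertex is not $w$ both edges of $C$ at it lie in $H$; so $C$ stays inside $H$ unless and until it reaches $w$ and departs along one of the two edges at $w$ that leave $H$. But a simple closed curve meets $w$ only once, so after such a departure $C$ can never return to $H$, contradicting that $C$ must come back to $e\in H$ in order to close up. Hence $C\subseteq H$, as required. (If the mini-gadget is literally the loop of Figure~\ref{fig:bigcounter}(b), the statement is immediate.)

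For the mini-bigadget the situation is the same except that there is a second distinguished degree-$2$ vertex---this is what the prefix ``bi'' records---so a circle may now enter $H$ at one distinguished vertex and leave at the other, an excursion the preceding argument no longer excludes. The same tracing shows that a circle $C$ containing an edge of $H$ but not contained in $H$ must, after leaving $H$, run through the bounded subgraph $\Gamma\setminus H$ on the few vertices drawn outside the mini-bigadget in Figure~\ref{fig:bigcounter}(d) (among $w,w_1,w_2,v_1,v_2,w_1',w_2'$, together with the attaching edges) before re-entering $H$ at the other distinguished vertex. I would then enumerate these excursions directly: at each relevant vertex one records which of the three possible pairings of its four edges is compatible with $C$ being a simple closed curve, with the fixed planar embedding of $G$, and with the arrangement being part of a circle representation. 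A finite case check should reveal that the only circle containing an edge of $H$ and not contained in $H$ is one whose excursion is precisely the path $w_iv_iww_i'$ for a single $i\in\{1,2\}$, and that the remaining edges $w_iw_i'$ outside $H$ are then forced onto one further circle; every other circle containing an edge of $H$ lies inside $H$. This is exactly the stated exception.

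The main obstacle is this bigadget enumeration. One has to be sure that no assignment of pairings at the distinguished vertices yields a longer or differently shaped excursion, and the check must honour both the planarity of the embedding and the finer local information at a vertex of a circle representation: whether the vertex is a touching point or a crossing point fixes the cyclic order of its four edges, not merely their partition into two pairs. The mini-gadget case, by contrast, requires no case analysis at all. I would also double-check the bookkeeping that assigns each edge to $H$ or to $\Gamma\setminus H$, since the precise wording of the exception is sensitive to the labelling in Figure~\ref{fig:bigcounter}.
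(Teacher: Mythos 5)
First, note that the paper does not prove this lemma at all: it is imported verbatim from Bekos and Raftopoulou (\cite{br}, Lemmas 5 and 12), so there is no in-paper argument to compare yours against; what follows assesses your attempt on its own terms. Your mini-gadget half is sound. Since every vertex of a mini-gadget other than its single degree-$2$ vertex has all four incident edges inside the mini-gadget, and a circle visits each vertex at most once and uses exactly two of the four edges there, the number of times a circle switches between mini-gadget edges and outside edges is even yet bounded by the number of visits to that one vertex, hence is zero; so the circle is trapped. (You have the labels permuted --- in Figure~\ref{fig:bigcounter}(b) the degree-$2$ vertex of the mini-gadget is $w_i$, with external neighbours $v_i$ and $w$, and each gadget-subgraph carries two mini-gadgets --- but the argument only needs the existence of a unique exit vertex, so this does not hurt you here.)

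The bigadget half, however, is a plan rather than a proof, and the plan has a concrete gap. Your tracing argument correctly shows that a circle $C$ containing an edge of the mini-bigadget $H_i$ but not contained in it must consist of a path inside $H_i$ from $w_i$ to $w_i'$ together with an external path from $w_i'$ back to $w_i$. Local edge-counting at $w_i$, $w_i'$, $w$ and $v_i$ then yields three shapes for the external path: the single edge $w_i'w_i$, the path $w_i'wv_iw_i$ (the two stated exceptions), or a \emph{long excursion} in which $C$ leaves $w$ towards the other side ($v_j$ or $w_j'$, $j\ne i$), wanders through the base graph, and re-enters $v_i$ along one of the two base edges at $v_i$ before finishing with $v_iw_i$. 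Nothing in your argument excludes this third shape: $v_i$ has degree $4$ in $G$ (two edges into the bigadget-subgraph and two into the base), and pairing $v_iw_i$ with a base edge at $v_i$ is locally consistent with both the touching and the crossing pattern for suitable cyclic orders. Consequently your ``finite case check'' is not actually finite --- the excursion is not a priori confined to the seven labelled vertices of the bigadget-subgraph --- and ruling it out is precisely where the planarity of the embedding, the $2$-cut $\{v_1,v_2\}$, and the geometry of circles must enter. That exclusion is the substantive content of Lemma~12 of \cite{br}, and it is the step you would still need to supply; the same goes for the claim, implicit in the statement, that the exceptional pair of circles can arise for at most one value of $i$.
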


We note that in the statement of Lemma 12 in \cite{br}, the edge denoted $(w_i,w_i')$ should read $(v_i,w)$. Our next lemma plays a similar role to Corollaries 1 and 2 in \cite{br}, the difference being that we wish to remove mini-(bi)gadgets rather than entire (bi)gadget-subgraphs. 

\begin{lemma} \label{lemma:pruning}
Let $G$ be a 4-regular planar multigraph with a specified gadget-subgraph or bigadget-subgraph. Using the labelling of Figure~\ref{fig:bigcounter}, let $G'$ be the multigraph obtained from $G$ by removing the vertices of the two mini-(bi)gadgets associated to the chosen (bi)gadget-subgraph together with all incident edges, and adding a possibly parallel edge $v_iw$ for $i=1,2$. If $G$ has a circle representation, then so does $G'$.
\end{lemma}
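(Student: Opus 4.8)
The plan is to start from a circle representation of $G$ and locally surgically replace the two mini-(bi)gadgets by a single arc joining the images of $v_1$ and $v_2$ through (a neighbourhood of) the image of $w$, keeping every other circle untouched. The key structural input is Lemma~\ref{prune2}: in any circle representation of $G$, the circles carrying edges of a mini-(bi)gadget use \emph{only} edges of that mini-(bi)gadget, with the single exception permitted in the bigadget case. Consequently the subdrawing associated to one (bi)gadget-subgraph is ``self-contained'': it meets the rest of the representation only at the three vertices $v_1$, $v_2$, $w$, and the circles $C$ through $v_1$ and $C'$ through $v_2$ that also carry the edges $v_1w$, $v_2w$ of the gadget-subgraph (respectively the paths $w_iv_iww_i'$ in the bigadget case) are not needed by any edge outside the (bi)gadget-subgraph. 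So I would first record this: deleting from the representation of $G$ all arcs that lie on circles internal to the two mini-(bi)gadgets leaves a valid partial drawing of $G'$ minus the two new edges $v_1w$, $v_2w$, in which the points $v_1,v_2,w$ are still present (they lie on circles $C_0$-type circles shared with the rest of $G$), and in a small disk around each of $v_1$, $v_2$, $w$ the two ``free'' arc-ends created by the deletion point into a common face.

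Second, I would reinstate the edges $v_1w$ and $v_2w$ of $G'$. After the deletion, $v_1$ has exactly two incident arc-ends from surviving circles and needs a third (and a fourth) — precisely the stubs left by the removed gadget edge $v_1w$; similarly for $v_2$ and for $w$, which loses the stubs of both $v_1w$ and $v_2w$. Because $v_1$ and $w$ were joined by an edge inside the gadget-subgraph, their freed stubs lie on the boundary of one common face $F_1$ of the reduced drawing, and I can draw a circular arc from $v_1$ to $w$ inside $F_1$ realising the new edge $v_1w$; likewise a circular arc from $v_2$ to $w$ in the appropriate face $F_2$ realising $v_2w$. (One must check $F_1 \neq F_2$, or handle the case where they coincide, so that the two new arcs can be taken disjoint except at $w$; this follows from the cyclic order of edges around $w$ in the fixed embedding of $G'$, which is inherited from $G$.) To make these genuine \emph{circles} rather than arbitrary arcs, I would invoke the same device used throughout the paper: apply a M\"obius transformation sending an interior point of $F_1$ to infinity so that an arc across $F_1$ can be completed to a full circle ``at infinity'' within that face, then transform back; since M\"obius transformations send circles to circles and preserve the combinatorics, the result is a bona fide circle representation. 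The two new circles each carry exactly one edge of $G'$, and no point lies on more than two circles because the new arcs sit inside previously circle-free faces.

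Third, I would verify that the object produced is a circle representation of $G'$: the touching/crossing points are exactly the vertices of $G'$ (the gadget-internal vertices are gone, $v_1,v_2,w$ survive with corrected degree $4$, everything else is unchanged), and the arcs match the edge multiset of $G'$ (old edges of $G$ outside the gadget-subgraph, plus the two new arcs for $v_1w$ and $v_2w$). In the bigadget case I would additionally account for the exceptional circles allowed by Lemma~\ref{prune2} — the circle through $w_iv_iww_i'$ and the one through $w_iw_i'$ — noting that after deletion these simply contribute to, or are subsumed by, the single new arc $v_iw$, so the same surgery goes through with only notational changes.

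I expect the main obstacle to be the planarity/face bookkeeping in the second step: one must argue that after excising the gadget circles the two freed pairs of stubs at $v_1$, $v_2$, $w$ really do lie on the boundaries of faces in which disjoint connecting arcs can be drawn consistently with \emph{the} embedding of $G'$ (the one inherited from $G$'s embedding), and that nothing forces the new arc $v_iw$ to cross a surviving circle. This is where Lemma~\ref{prune2} is doing the heavy lifting — it guarantees the excised part is cleanly separated from the rest — but turning ``cleanly separated'' into an explicit face in which a circle can be routed, and then realising that arc as a circle via the M\"obius trick, is the step requiring care.
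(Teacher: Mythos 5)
Your overall strategy---delete the circles internal to the mini-(bi)gadgets and then repair the drawing---starts in the right place, but the repair step is not workable as described, and that is where the real content of the lemma lies. The problem is your plan to realise the new edge $v_iw$ by drawing a \emph{new} circular arc between $v_i$ and $w$ and completing it to a fresh circle. First, after deleting the circles that consist exclusively of mini-(bi)gadget edges, the points $v_i$ and $w$ still lie on two surviving circles each: the circles carrying the retained edge $v_iw$ and the arcs $v_iw_i$, $w_iw$ are \emph{not} deleted, since those edges belong to the gadget-subgraph but not to the mini-gadget. Hence any additional circle through $v_i$ or $w$ would put that point on three circles, violating the definition of a circle representation. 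Second, a new circle meeting the configuration in exactly the two points $v_i$ and $w$ has two arcs between them, both of which must represent edges; it would realise a digon, not the single extra edge you want, so there is no way to add a circle ``carrying exactly one edge''. Your stub bookkeeping is also off: the edges $v_1w$ and $v_2w$ are not removed in forming $G'$ (only edges incident to mini-(bi)gadget vertices are), so $w$ does not lose those arc-ends, and the edge multiset of $G'$ contains both the original and the new parallel copies of $v_iw$.

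The idea you are missing is that no new arc needs to be drawn at all. By Lemma~\ref{prune2}, the two mini-gadget edges at the attachment vertex $w_i$ must lie on a single circle internal to the mini-gadget (otherwise a circle containing a mini-gadget edge would also contain $v_iw_i$ or $w_iw$), so the \emph{other} circle through $w_i$ carries the arcs $v_iw_i$ and $w_iw$. Once the internal circles are deleted, $w_i$ lies on only one circle and ceases to be a vertex of the representation, and those two arcs merge into a single arc of a surviving circle joining $v_i$ to $w$---this \emph{is} the new parallel edge, with no face-routing, no new circles, and no M\"obius transformation required. The same observation disposes of the remaining mini-gadget vertices, each of which loses at least one of its two defining circles. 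The only case needing separate treatment is the exceptional bigadget situation allowed by Lemma~\ref{prune2}, where the surviving arcs representing $w_iv_iww_i'$ and $w_iw_i'$ form a Jordan curve that is not a circle; there one replaces the $w_iw_i'$ arc by the arc completing the circle through $w_iv_iww_i'$, which creates no new intersections because that completing arc was already present in the representation of $G$ (it carried a path inside the now-deleted mini-bigadget). Your proposal does not engage with this case beyond asserting it goes through ``with only notational changes''.
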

\begin{proof}
We shall remove one mini-(bi)gadget at a time. Suppose that the subcollection of circles containing the edges of our mini-(bi)gadget all consist exclusively of edges belonging to that mini-(bi)gadget. Then the mini-(bi)gadget can be removed by simply deleting those circles; the arcs representing its edges are gone, and at least one of the two circles whose touching or crossing point represented any given vertex has been deleted. Indeed, the only vertices for which one of its two defining circles remain are $w_i$ if we have a mini-gadget, or $w_i$ and $w_i'$ in a mini-bigadget. This leaves an arc corresponding to an extra edge $v_iw$, so we precisely have a circle representation of $G'$.

By Lemma~\ref{prune2}, the only other possibility is that among the circles representing the edges of a mini-bigadget, there are two that contain some edges belonging to the mini-bigadget and some that do not. In this case, deleting all arcs corresponding to edges of the mini-bigadget nearly gives the required circle representation of $G'$, except that the Jordan curve formed by the arcs representing $w_iv_iww_i'$ and $w_iw_i'$ is not a circle. To fix this, we transform the latter arc into that needed to complete the circle containing $w_iv_iww_i'$. This new arc existed in the original circle representation of $G$, representing some path within the now deleted mini-bigadget, so no extra intersections are produced.
\end{proof}

\begin{theorem} \label{counter}
The simple graphs shown in Figure~\ref{smallcounter} are not circle representable.
\end{theorem}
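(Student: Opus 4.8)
The plan is to recognise each graph $G$ of Figure~\ref{smallcounter} as a subdivision of $M$ with gadgets attached, to locate four (bi)gadget-subgraphs inside it, and then to strip these off one at a time using Lemma~\ref{lemma:pruning}; the graph that remains will be exactly $M$, which is not circle representable by Theorem~\ref{multicounter}.

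First I would pin down the construction. Let $S=v_1v_2\cdots v_8v_1$ be the cycle of simple edges of $M$, and let $u_1,u_2,u_3,u_4$ be the four vertices incident to the pairs of digons at $(v_1,v_4)$, $(v_2,v_7)$, $(v_3,v_6)$, $(v_5,v_8)$ respectively. Subdividing one edge of each of the eight digons produces a simple graph $M'$ with eight vertices of degree~$2$; the two that lie on the subdivided digons through $u_i$ I will call $x_i$ and $y_i$. Then $G$ is obtained from $M'$ by attaching one octahedral mini-gadget, respectively mini-bigadget, at each of $x_1,y_1,\dots,x_4,y_4$, identifying the degree-$2$ vertex of the mini-(bi)gadget with it; a vertex count gives $12+8+8\cdot 6=68$, matching Figure~\ref{smallcounter}. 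The key observation is that, for each $i$, the subgraph of $G$ formed by $u_i$, $x_i$, $y_i$, the vertex of $S$ adjacent to $x_i$, the vertex of $S$ adjacent to $y_i$, and the two mini-(bi)gadgets attached at $x_i$ and $y_i$ is a copy of the octahedral gadget-subgraph of Figure~\ref{fig:bigcounter}(c) (respectively the octahedral bigadget-subgraph of Figure~\ref{fig:bigcounter}(e)), in which the vertex $w$ is $u_i$, the vertices labelled $v_1$ and $v_2$ there are those two vertices of $S$, and the two mini-(bi)gadgets occupy the positions of the shaded loops; here the two edges of $S$ at each of those two vertices play the role of the pair of edges by which $v_1$ and $v_2$ attach to the base in Figure~\ref{fig:bigcounter}. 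Since, for different $i$, the vertex sets involved --- the $u_i$, the eight subdivision vertices, the eight mini-(bi)gadgets, and the pairs $\{v_1,v_4\}$, $\{v_2,v_7\}$, $\{v_3,v_6\}$, $\{v_5,v_8\}$ --- are pairwise disjoint, $G$ contains four pairwise vertex-disjoint (bi)gadget-subgraphs.

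Now suppose $G$ has a circle representation. Applying Lemma~\ref{lemma:pruning} to the first of these (bi)gadget-subgraphs deletes its two mini-(bi)gadgets, and hence also the vertices $x_1,y_1$ together with their incident edges, and adds a parallel edge joining $u_1$ to each of the two relevant vertices of $S$; since the un-subdivided digon edges at $u_1$ are still present, the net effect is exactly to restore the two digons at $u_1$. The resulting multigraph is again $4$-regular and planar and still contains the three other, disjoint, (bi)gadget-subgraphs, so Lemma~\ref{lemma:pruning} can be applied again; iterating four times, and carrying a circle representation along at each step, we obtain a circle representation of the graph obtained from $M'$ by restoring all eight digons --- that is, of $M$ itself. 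This contradicts Theorem~\ref{multicounter}, so $G$ is not circle representable. The one step that is not pure bookkeeping, and where I expect the real work to lie, is the structural identification in the second paragraph: one must check against Figure~\ref{fig:bigcounter} that the two mini-(bi)gadgets sitting near each $u_i$, together with the surrounding edges of $M'$, really do assemble into the stated (bi)gadget-subgraph with the indicated labelling and a compatible plane embedding. Once that is settled, Lemma~\ref{lemma:pruning} carries the argument through.
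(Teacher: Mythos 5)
Your proposal is correct and is exactly the paper's argument: the paper's own proof simply applies Lemma~\ref{lemma:pruning} once per (bi)gadget-subgraph to recover a circle representation of $M$, contradicting Theorem~\ref{multicounter}. The structural identification you spell out in your second paragraph (locating the four disjoint octahedral (bi)gadget-subgraphs around the $u_i$ and checking that pruning restores the digons) is left implicit in the paper but is the right bookkeeping.
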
 
\begin{proof}
Suppose we have a circle representation of either graph. Then applying Lemma~\ref{lemma:pruning} once for each (bi)gadget-subgraph would leave a circle representation of the base multigraph $M$ from Figure~\ref{fig:multicounter}(a). This contradicts Theorem~\ref{multicounter}. 
\end{proof}

One can generate more counterexamples on 68 vertices by using combinations of octahedral mini-gadgets and mini-bigadgets, and infinitely many larger counterexamples by attaching larger gadgets to the same base multigraph. 

 \section*{Acknowledgements}
I wish to thank Brendan McKay, Scott Morrison, and Catherine Greenhill for their invaluable guidance and generous support, as well as the anonymous reviewers for their helpful suggestions which lead to several simplifications.

\bibliography{refs_circles}
\bibliographystyle{abbrv}

\end{document}